\newtheorem{theorem}{Theorem} 
\newtheorem{definition}{Definition}
\newtheorem{claim}{Claim}
\newcommand{\B}{\mathcal{B}}
\newcommand{\h}{\mathcal{H}}
\newcommand{\abs}[1]{\left\lvert{#1}\right\rvert}
\DeclareMathOperator{\ex}{ex}
\title{On $3$-uniform hypergraphs avoiding \\ a cycle of length four}
\author{Beka Ergemlidze}
\address{Beka Ergemlidze, Department of Mathematics and Statistics, University of South Florida,
Tampa, Florida 33620, USA.}
\email{\texttt{beka.ergemlidze@gmail.com}}
\author{Ervin Gy\H{o}ri}
\address{Ervin Gy\H{o}ri, Alfr\'ed R\'enyi Institute of Mathematics, Budapest.}
\email{\texttt{gyori.ervin@renyi.mta.hu}} 
\author{Abhishek Methuku}
\address{Abhishek Methuku,  School of Mathematics,
 University of Birmingham,
Edgbaston,
Birmingham B15 2TT,
United Kingdom.}
\email{\texttt{abhishekmethuku@gmail.com}}
\author{Nika Salia}
\address{Nika Salia, Alfr\'ed R\'enyi Institute of Mathematics, Budapest.}
\email{\texttt{nikasalia@gmail.com}}
\author{Casey Tompkins}
\address{Casey Tompkins, Institute for Basic Science, Daejeon.}
\email{\texttt{ctompkins496@gmail.com}}
\thanks{The research of all authors was partially supported by the National Research, Development and Innovation Office NKFIH, grants K116769, K132696. The research of A.~Methuku was supported by the EPSRC, grant no. EP/S00100X/1 (A.~Methuku).
The research of N.~Salia was partially supported by the Shota Rustaveli National Science Foundation of Georgia SRNSFG, grant number FR-18-2499. The research of C.~Tompkins was supported by the Institute for Basic Science, IBS-R029-C1.}
\begin{document}
\maketitle

\begin{abstract}
In this note we show that the maximum number of edges in a $3$-uniform hypergraph without a Berge cycle of length four is at most $(1+o(1))\frac{n^{3/2}}{\sqrt{10}}$.
This improves earlier estimates by Gy\H{o}ri and Lemons and by F\"uredi and \"Ozkahya. 
\end{abstract}

\section{Introduction}

Given a hypergraph $\h$, let $V(\h)$ and $E(\h)$ denote the set of vertices and edges of $\h$. A hypergraph is called $r$-uniform if all of its edges have size $r$. Berge introduced the following definitions of a path and cycle in a hypergraph.


\begin{definition}
  A \emph{Berge cycle} of length $\ell$ in a hypergraph is a set of $\ell$ distinct vertices $\{v_1, \ldots, v_{\ell}\}$ and $\ell$ distinct edges $\{e_1, \ldots, e_{\ell}\}$ such that $\{v_i, v_{i+1}\} \subseteq e_i$ with indices taken modulo $\ell$. A \emph{Berge path} of length $\ell$ is a set of $\ell+1$ distinct vertices $\{v_1,\dots,v_{\ell+1}\}$ and $\ell$ distinct edges $\{e_1,\dots,e_\ell\}$ such that for $1 \le i \le \ell$ we have $\{v_i,v_{i+1}\} \subseteq e_i$.  
\end{definition}

Let $\ex_r(n,BC_{\ell})$ denote the maximum number of edges in a $r$-uniform hypergraph without a Berge cycle of length $\ell$. In the case $r=2$ we write simply $\ex(n,C_{\ell})$.

A well-known result of Bondy and Simonovits~\cite{bondy} asserts that for all $\ell \ge 2$ we have $\ex(n,C_{2\ell}) = O(n^{1+1/\ell})$, however the order of magnitude is only known to be sharp in the cases $\ell=2,3,5$.  Erd\H{o}s, R\'enyi and S\'os~\cite{ErdRenyiSos} proved the asymptotic result $\ex(n,C_4) = \frac{n^{3/2}}{2}+o(n^{3/2})$.  For hypergraphs of higher uniformity Gy\H{o}ri and Lemons~\cite{gyori} extended the Bondy Simonovits theorem and showed in particular that $\ex_r(n,BC_4) = O(n^{3/2})$. It follows from the results of F\"uredi and \"Ozkahya~\cite{furedi} that $\ex_3(n,BC_4) \le (1+o(1))\frac{2}{3}n^{3/2}$ (see Theorem~2 in~\cite{furedi}). In this note we significantly improve this bound as follows.


\begin{theorem}
\label{maintheorem}
\begin{displaymath}
(1-o(1))\frac{n^{3/2}}{3\sqrt{3}} \le \ex_3(n,C_4) \le (1+o(1))\frac{n^{3/2}}{\sqrt{10}}.
\end{displaymath}
\end{theorem}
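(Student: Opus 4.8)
\medskip

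\noindent\emph{The lower bound (sketch).} This is the easier inequality. We give an explicit construction starting from a dense $C_4$-free graph --- for instance an Erd\H{o}s--R\'enyi polarity graph, or the point--line incidence graph of a projective plane of order $q$ with $q\sim\sqrt{n/3}$. From such a graph on roughly $q^2$ vertices we build a $3$-uniform hypergraph on $n\sim 3q^2$ vertices by replacing pairs of its edges by triples in a controlled pattern, arranged so that every pair of vertices lies in only a bounded number of the new triples and so that every $4$-cycle in the shadow graph has two consecutive edges contained in a single hyperedge. A short case check then shows that no four distinct hyperedges together with four distinct vertices can form a Berge $C_4$, and a direct count gives $(1-o(1))q^3=(1-o(1))\tfrac{n^{3/2}}{3\sqrt3}$ edges. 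I expect this direction to be routine.

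\medskip

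\noindent\emph{The upper bound (plan).} Let $\h$ be a $3$-uniform hypergraph on $n$ vertices with no Berge $C_4$ and with $m$ edges. For a pair $\{x,y\}$ write $c(x,y)$ for its codegree, let $\partial\h$ be the shadow graph (its edges being the pairs of positive codegree), and put $E_2:=\abs{E(\partial\h)}$, so that $\sum_{\{x,y\}}c(x,y)=3m=E_2+\sum_{\{x,y\}}(c(x,y)-1)^+$. For a vertex $v$ the link graph $L_v$ (with edges $\{a,b\}$ for which $\{v,a,b\}\in E(\h)$) has exactly $d_\h(v)$ edges, and $c(v,w)=\deg_{L_v}(w)$. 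The scheme rests on two structural facts, each proved by producing an explicit Berge $C_4$ whenever it fails.
\begin{enumerate}
\item[(A)] \emph{Cherry codegrees.} For any vertices $x,z$, the number of common neighbours of $x$ and $z$ in $\partial\h$ is at most $c(x,z)+O(1)$: among those common neighbours $w$ for which $\{x,z,w\}$ is not an edge, any two of them $w,w'$ must satisfy $\{x,w,w'\}\in E(\h)$ or $\{z,w,w'\}\in E(\h)$ (else four distinct edges form a Berge $C_4$ on $x,w,z,w'$), so the relevant restriction of $L_x\cup L_z$ is a complete graph; the same Berge-$C_4$-freeness forces this union to be $C_4$-free, hence to span at most $3$ vertices.
\item[(B)] \emph{Sunflowers are sparse.} At each vertex $x$ the vertices $y$ with $\deg_{L_x}(y)\ge 2$ have essentially pairwise disjoint neighbourhoods in $L_x$ (a shared neighbour gives a Berge $C_4$), whence $\sum_{\{x,y\}}(c(x,y)-1)^+\le(\tfrac12+o(1))\cdot 3m$, i.e.\ $E_2\ge(\tfrac32-o(1))m$.
\end{enumerate}
Now double count paths of length two in $\partial\h$. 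By convexity their number is $\sum_v\binom{d_{\partial\h}(v)}{2}\ge \tfrac{2E_2^2}{n}-E_2$, while by (A) it is $\sum_{\{x,z\}}\bigl(c(x,z)+O(1)\bigr)=3m+O(n^2)$. Combining this with (B) in the identity $3m=E_2+\sum(c-1)^+$ and optimising the split between the ``linear'' part (pairs of codegree one, controlled by the $C_4$-free estimate from (A)) and the ``sunflower'' part (the edges lying in codegree-$\ge 2$ pairs, controlled by (B)) yields $m\le(1+o(1))\tfrac{n^{3/2}}{\sqrt{10}}$.

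\medskip

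\noindent\emph{Where the difficulty lies.} Getting an $O(n^{3/2})$ bound --- even $m\le(1+o(1))\tfrac{n^{3/2}}{\sqrt3}$ --- from the crude versions of (A) and (B) is not hard; the work is in the constant. The estimate above degrades exactly when $\h$ has many ``sunflower'' edges (so that $E_2$ is close to $\tfrac32 m$), so to reach $\tfrac{1}{\sqrt{10}}$ one needs the complementary observation that an abundance of sunflower edges itself forces the shadow graph to be sparse --- roughly, that the ``private'' third vertices of a high-codegree pair cannot be reused, so high-codegree pairs are correspondingly rare --- and then a genuine two-parameter optimisation balancing the two regimes against each other. The technical heart is therefore a short but complete list of forbidden local configurations of up to four hyperedges, each disposed of by exhibiting a concrete Berge $C_4$, with the error terms in (A)--(B) tightened enough that the resulting optimisation returns the sharp constant rather than a lossy one.
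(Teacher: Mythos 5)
Your write-up does not actually prove the upper bound with the stated constant, and you say as much yourself: facts (A) and (B), even granted in full, feed the endpoint-count/convexity scheme only up to about $n^{3/2}/\sqrt{3}$, and the passage to $n^{3/2}/\sqrt{10}$ is deferred to an unspecified ``complementary observation'' plus a ``two-parameter optimisation''. That deferred step is precisely where the paper's work lies, and it is not a routine tightening of error terms. Quantitatively: even with the best conceivable constants in your two lemmas --- at most two extra common neighbours per pair (your $+3$ can indeed be pushed to $+2$) and $E_2\ge 2\abs{E(\h)}$ rather than your $\tfrac32\abs{E(\h)}$ --- your scheme yields only $\tfrac{8\abs{E(\h)}^2}{n}\le (1+o(1))n^2$, i.e.\ $\abs{E(\h)}\le(1+o(1))\tfrac{n^{3/2}}{\sqrt8}$. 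The paper gets the extra ``$+2$'' in $10=8+2$ from a structural ingredient absent from your plan: a decomposition of $\h$ into blocks (maximal subhypergraphs glued along pairs of vertices, each being either one special edge with pendant edges or $K_4^{(3)-}$), from which it deduces both $\sum_v d_{ex}(v)\ge\abs{E(\h)}$ (equivalently $E_2\ge 2\abs{E(\h)}$) and, crucially, that if two shadow-neighbours of a vertex $v$ come from edges of $v$ lying in \emph{different} blocks, then there is at most \emph{one} good $3$-path between them. The latter is encoded as $\#(\text{good }3\text{-paths})\le 2\binom n2-4\sum_v\binom{d_b(v)}{2}$ with $\sum_v d_b(v)\ge\abs{E(\h)}$, and Jensen applied to both degree sums gives $\tfrac{8\abs{E(\h)}^2}{n}+\tfrac{2\abs{E(\h)}^2}{n}\le(1+o(1))n^2$. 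Your proposed substitute (``many sunflower edges force the shadow to be sparse'') is not formulated, let alone proved, so the constant $1/\sqrt{10}$ is not reached.

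Two smaller points. In (A), summing the per-pair bound $c(x,z)+3$ gives a $3\binom n2$ main term, which is already too lossy; the paper proves at most two good $3$-paths per endpoint pair after discarding only $O(\abs{E(\h)})$ exceptional paths (those whose middle triple is an edge or which sit on a ``rare'' $4$-cycle), i.e.\ effectively $2\binom n2$. In (B), the stated justification is not literally correct: in a $K_4^{(3)-}$ block two degree-$2$ vertices of a link share a neighbour with no Berge $C_4$ present, so ``essentially pairwise disjoint neighbourhoods'' needs the exceptional configurations classified --- which is exactly what the block analysis does, and it yields the stronger and needed $E_2\ge 2\abs{E(\h)}$. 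The lower-bound sketch is vaguer than the standard construction (take an extremal $C_4$-free bipartite graph and add to every edge at a vertex $v$ of one side a clone $v'$ of $v$), but that direction is indeed routine and is only cited in the paper.
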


 The lower bound comes from a construction originating in Bollob\'as and Gy\H{o}ri~\cite{bb} (stated more generally in~\cite{gyori0}). We take a $C_4$-free bipartite graph with color classes of size $n/3$ and $\frac{(2n/3)^{3/2}}{2\sqrt{2}} = \frac{n^{3/2}}{3\sqrt{3}}$ edges asymptotically.  For every vertex $v$ in one of the color classes, we take an additional vertex $v'$ and add it to every edge in the graph incident to $v$.  This results in a $3$-uniform hypergraph on $n$ vertices with $\frac{n^{3/2}}{3\sqrt{3}}$ edges asymptotically, and it is easy to verify this hypergraph contains no Berge~$C_4$.

\section{Proof of the upper bound in Theorem \ref{maintheorem}}

Now, we prove the upper bound. Let $\h$ be a $3$-uniform hypergraph with no Berge $C_4$ and no isolated vertices.  A block $\B$ of a hypergraph $\h$ is defined to be a maximal subhypergraph of $\h$  with the property that for any two edges $e,f \in E(\B)$, there is a sequence of edges of $\h$, $e=e_1,e_2,\dots,e_t=f$, such that $\abs{e_i \cap e_{i+1}} = 2$ for all $1 \le i \le t-1$ and $V(\B)=\cup_{h \in E(\B)}h$.  It is easy to see that the blocks of $\h$ define a unique partition of  $E(\h)$.

For a block $\B$ and an edge $h\in E(\B)$, we say $h$ is a \emph{leaf} if there exists $x\in h$ such that the only edge of $\B$ incident to $x$ is $h$. It is simple to observe that the set of non-leaf edges of a block $\B$ is either the empty set, a single edge or the edges of a complete hypergraph on $4$-vertices minus an edge, $K_4^{(3)-}$. Even more if the set of non-leaf edges of $\B$ is $E(K_4^{(3)-})$, then $\B=K_4^{(3)-}$.  This implies that the set $B(\h)=\{\B\mid \B$ is a block in $\h\}$ of all blocks of $\h$, can be partitioned into the following types of blocks: 

\begin{enumerate}
    
     \item  We say $\B\in B(\h)$ is \emph{type~1} if there exists an edge $e\in E(\B)$ such that  for all distinct $f_1,f_2\in E(\B)$, $f_1, f_2\neq e$, we have $\abs{e \cap f_i}=2$, for $i=1,2$ and $f_1\cap f_2 \subseteq e$.
    
    \item We say $\B\in B(\h)$ is \emph{type~2} if $\B=K_4^{(3)-}$.
    
\end{enumerate}


Define the $2$-shadow of a hypergraph to be the graph on the same set of vertices whose edges are all pairs of vertices $\{x,y\}$ for which there exists an edge $e \in E(\h)$ such that $\{x,y\} \subset e$.
We denote the $2$-shadow of a hypergraph $\h$ by $\partial \h$. The proof of Theorem~\ref{maintheorem} will proceed by estimating the number of $3$-paths ($3$-vertex paths) in the $2$-shadow of a Berge~$C_4$-free hypergraph in two different ways.  To this end, we introduce several notions of the degree of a vertex.  Given a vertex $v$ in a hypergraph $\h$, $d(v)$ denotes the classical hypergraph degree of $v$, in particular $d(v)=\abs{\{h\in E(\h): v\in h\}}$.  Let $d_s(v)$ be the (graph) degree of $v$ in the $2$-shadow of the hypergraph, in particular $d_s(v)=\abs{\{e\in E(\partial \h): v\in e\}}$.  Then, we define the \emph{excess degree} of the vertex $v$ to be $d_{ex}(v) = d_s(v) - d(v)$.  Finally, we define the \emph{block degree} $d_b(v)$ to be the total number of blocks containing an edge which contains $v$.

Notice that for every $4$-cycle $x_1,x_2,x_3,x_4,x_1$ of $\partial \h$, there exists three distinct integers $1\le i < j < k \leq 4$ such that $\{x_i,x_j,x_k\} \in E(\h)$, otherwise $\h$ contains a copy of Berge~$C_4$. We call this edge a \emph{representative edge} of this $4$-cycle. Note that  each $4$-cycle of $\partial \h$ has either $1$, $2$ or $3$  representative edges.  Two edges of $\h$ sharing two vertices yield a $C_4$ in $\partial \h$. However these are not only types of $C_4$'s in  $\partial \h$. We call a $4$-cycle  of $\partial \h$ \emph{rare} if the induced subhypergraph of $\h$ on the vertices of cycle does not contain  two edges sharing a diagonal pair of vertices of the $4$-cycle. In the following claim, we show that the number of such cycles is small.  


We define a particular type of $3$-path of $\partial \h$.
A $3$-path, $x_1,x_2,x_3$, is called \emph{good} if  $\{x_1,x_2,x_3\} \notin E(\h)$ and there is no $x\in V(\h)$ such that $x,x_1,x_2,x_3,x$ is a rare cycle of $\partial \h$.

\begin{claim}
\label{lemma:two_three_path}
For any $a,b \in V(\h)$, here are at most two good $3$-paths in $\partial \h$ with end points $a$ and $b$.
\end{claim}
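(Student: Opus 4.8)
The plan is to argue by contradiction: suppose there are three distinct good $3$-paths $P_1, P_2, P_3$ between $a$ and $b$, say $P_i = a, y_i, b$ with $y_1, y_2, y_3$ distinct vertices. Each pair $P_i, P_j$ forms a $4$-cycle $a, y_i, b, y_j, a$ in $\partial\h$, so by the observation preceding the claim each such $4$-cycle has a representative edge of $\h$. Since all three $3$-paths are good, none of the triples $\{a, y_i, b\}$ is an edge of $\h$, so the representative edge of the $4$-cycle $a, y_i, b, y_j, a$ must be one of $\{a, y_i, y_j\}$, $\{b, y_i, y_j\}$, or one of $\{a,y_i,b\},\{a,y_j,b\}$ — but the last two are excluded, so it is $\{a, y_i, y_j\}$ or $\{b, y_i, y_j\}$. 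Moreover, since the paths are good, none of these $4$-cycles is rare, so in fact the induced subhypergraph on $\{a, y_i, b, y_j\}$ contains two edges sharing a diagonal pair; the diagonals of this cycle are $\{a,b\}$ and $\{y_i,y_j\}$. If two edges share the pair $\{a,b\}$, these two edges plus any other two edges coming from the paths would close up a Berge $C_4$ (since $\{a,b\}$ lies in $\partial\h$ but $\{a,y_k,b\}\notin E(\h)$ we would get extra edges to play with) — I need to check this cannot happen, or handle it. The cleaner route: the two edges sharing a diagonal must share $\{y_i, y_j\}$, i.e. both $\{a, y_i, y_j\}$ and $\{b, y_i, y_j\}$ lie in $E(\h)$.

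So the key structural consequence is: for every pair $i \neq j$, both $\{a, y_i, y_j\} \in E(\h)$ and $\{b, y_i, y_j\} \in E(\h)$. Now I would derive a Berge $C_4$ directly. Consider the three edges $e_{12} = \{a, y_1, y_2\}$, $e_{13} = \{a, y_1, y_3\}$, $e_{23} = \{a, y_2, y_3\}$, together with, say, $f_{12} = \{b, y_1, y_2\}$. These four edges are distinct. I claim they contain a Berge $C_4$: the vertices $y_1, y_2, a$ appear as follows — $e_{12}$ covers $\{y_1,y_2\}$, $e_{13}$ covers $\{y_1, a\}$, $e_{23}$ covers $\{a, y_2\}$, which already gives a Berge triangle on $\{y_1, y_2, a\}$, not a $C_4$. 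Instead use the four distinct edges $\{a,y_1,y_2\}, \{b,y_2,y_3\}, \{a,y_3,y_1\}$ and one more: better, take the cycle on vertices $y_1, a, y_2, b$ using edges $\{y_1, a, y_3\}$ for the pair $\{y_1,a\}$, $\{a, y_2, y_3\}$ for $\{a, y_2\}$, $\{y_2, b, y_1\}$ for $\{y_2, b\}$, and $\{b, y_1, y_3\}$ for $\{b, y_1\}$ — these are four distinct edges (two contain $a$ and not $b$, two contain $b$ and not $a$, and within each pair the third vertex differs), giving a Berge $C_4$ on $\{y_1, a, y_2, b\}$, a contradiction.

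The main obstacle I anticipate is the bookkeeping in the first paragraph: correctly ruling out, using goodness and Berge $C_4$-freeness, every possibility for the representative edges except the one that forces both $\{a,y_i,y_j\}$ and $\{b,y_i,y_j\}$ into $E(\h)$ — in particular handling the subcase where two edges of $\h$ share the diagonal $\{a,b\}$, which must be shown either impossible or still to produce a Berge $C_4$ together with the other available edges. Once that structural dichotomy is pinned down, the final extraction of a Berge $C_4$ from four explicit distinct edges is routine, but one must be careful to verify the four edges chosen are genuinely distinct (which is why splitting them as "two through $a$, two through $b$" is convenient) and that the four covered pairs really do form a $4$-cycle in the required cyclic order.
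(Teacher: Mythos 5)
Your proof is correct, and it takes a genuinely different route from the paper's. You lean on the second (rare-cycle) condition in the definition of a good path: since $a,y_i,b$ is good and $y_j,a,y_i,b,y_j$ is a $4$-cycle of $\partial\h$, that cycle is not rare, so the induced subhypergraph on $\{a,y_i,b,y_j\}$ contains two edges sharing a diagonal. The case you flagged as an obstacle (two edges sharing the diagonal $\{a,b\}$) is ruled out immediately, with no Berge-$C_4$ chase needed: the only $3$-subsets of $\{a,y_i,b,y_j\}$ containing both $a$ and $b$ are $\{a,b,y_i\}$ and $\{a,b,y_j\}$, and both are non-edges by the first condition of goodness, so no edge of the induced subhypergraph contains $\{a,b\}$ at all. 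Hence the shared diagonal must be $\{y_i,y_j\}$, forcing both $\{a,y_i,y_j\}$ and $\{b,y_i,y_j\}$ into $E(\h)$ for every pair $i\neq j$, and your four edges $\{a,y_1,y_3\},\{a,y_2,y_3\},\{b,y_1,y_2\},\{b,y_1,y_3\}$ are indeed distinct and yield a Berge $C_4$ on $y_1,a,y_2,b$; the representative-edge discussion in your first paragraph is redundant once you use non-rareness. The paper argues more economically, using only the first condition of goodness ($\{a,v_i,b\}\notin E(\h)$): it chooses witness hyperedges $e_i\supseteq\{a,v_i\}$ and $f_i\supseteq\{b,v_i\}$ for each path, notes $e_i\neq f_j$ for all $i,j$ (by goodness when $i=j$, by $3$-uniformity when $i\neq j$), observes that at most one coincidence can occur among the $e_i$'s (if $e_2=e_3$ then that edge is $\{a,v_2,v_3\}$, so $e_1$ differs from it) and likewise among the $f_i$'s, and assembles a Berge $C_4$ from four distinct witnesses. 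So the paper's argument does not need the rare-cycle condition at all, while yours uses it to force explicit hyperedges; both are valid, and yours buys a very concrete final contradiction at the cost of invoking the stronger hypothesis.
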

\begin{proof}
 Suppose, by contradiction, that there are three distinct vertices  $v_1, v_2, v_3$ different from $a$ and $b$ such that $a,v_i,b$ forms a good $3$-path of $\partial \h$ for all integer $1\leq i\leq 3$.  It follows that there are three Berge paths $a,e_i,v_i,f_i,b$,  for all integer $1\leq i\leq 3$ in $\h$. Note that those edges are not necessarily distinct. But we have $e_i \neq f_i$ and  $e_i \neq f_j$, $i\neq j$, since  $\{a,v_i\} \subset e_i$ and $\{b,v_j\} \subset f_j$ and $\h$ is $3$-uniform. Note that if $e_2=e_3$, then $e_2=\{a,v_2,v_3\}$, hence $e_1\neq e_2$. Similarly we have either  $f_1\neq f_2$ or $f_1 \neq f_3$.  We may assume, without loss of generality, that $e_1\neq e_2, e_3$.  It follows that either  $a, e_1 , v_1, f_1 ,b, f_2 ,v_2, e_2, a$  or  $a, e_1 , v_1, f_1 ,b, f_3 ,v_3, e_3, a$ is a Berge $C_4$, a contradiction.
\end{proof}

\begin{claim}
\label{claim:rarec4}
There are at most $6\abs{E(\h)}$ rare $4$-cycles in $\partial \h$.
\end{claim}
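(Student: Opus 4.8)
The plan is to associate each rare $4$-cycle to an edge of $\h$ in such a way that no edge is charged more than $6$ times. Let $C=x_1,x_2,x_3,x_4,x_1$ be a rare $4$-cycle of $\partial\h$. By definition, $\h$ restricted to $\{x_1,x_2,x_3,x_4\}$ contains no two edges sharing a diagonal pair (i.e.\ neither $\{x_1,x_3\}$ nor $\{x_2,x_4\}$ is contained in two edges of $\h$), so in particular the ``standard'' way a $C_4$ arises — two edges of $\h$ meeting in two vertices that are a diagonal — does not occur here. On the other hand $C$ still has a representative edge, say $e=\{x_i,x_j,x_k\}\in E(\h)$; this edge contains exactly one side of the cycle together with one diagonal, or two consecutive sides, depending on which three of the four vertices it uses. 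I would first check that, since the cycle is rare, the representative edge actually contains two \emph{consecutive} edges of the cycle: indeed $e$ contains three of the four vertices, hence two of the three pairs inside $e$ are sides of $C$ and one is a diagonal; so $e$ witnesses a path of length $2$ along the cycle, say $x_{i-1},x_i,x_{i+1}$, plus the diagonal $x_{i-1}x_{i+1}$.

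The charging scheme is then: charge $C$ to one of its representative edges $e$. I need to bound, for a fixed edge $e=\{a,b,c\}\in E(\h)$, the number of rare $4$-cycles that can be charged to $e$. Such a cycle has three of its vertices among $\{a,b,c\}$; there are $3$ choices for which vertex of $e$ is the ``middle'' vertex of the path $e$ witnesses along the cycle (equivalently, which pair inside $e$ plays the role of the diagonal). Having fixed, say, that the cycle restricted to $e$ looks like $a,c,b$ (a path with middle vertex $c$ and diagonal $ab$), the cycle is determined by its fourth vertex $y$, which must satisfy $\{a,y\},\{b,y\}\in E(\partial\h)$; a priori there could be many such $y$. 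The key point is to use Claim~\ref{lemma:two_three_path} — but applied to the two \emph{other} $3$-paths hidden in the rare cycle, namely $a,c,b$ is not good (it lies in an edge), but $a,x_?,b$ through $y$ and $a,y,b$ are $3$-paths joining $a$ and $b$. I would argue that for all but boundedly many choices of $y$, the path $a,y,b$ is good (it is not contained in an edge of $\h$ since no diagonal lies in two edges, and it is not completed to a rare cycle by $c$ or by another vertex — here one must be a little careful, but the ``completing'' vertex, if it existed, would itself produce a rare cycle sharing the side, and there are at most two good ones), so by Claim~\ref{lemma:two_three_path} there are at most $2$ such $y$ per choice of middle vertex, giving $3\cdot 2=6$.

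The main obstacle, and where care is needed, is the interplay between ``rare'' and ``good'': a rare $4$-cycle $a,y,b,c,a$ contains the $3$-path $a,y,b$, and I want that path to be good so that Claim~\ref{lemma:two_three_path} bounds the number of $y$'s, but goodness forbids exactly the existence of a vertex completing $a,y,b$ to a rare cycle — and $c$ does complete it. The resolution must be that I instead apply Claim~\ref{lemma:two_three_path} after removing the ``bad'' completions: more precisely, fix the pair $\{a,b\}$ and the middle vertex, and observe that distinct rare cycles charged to $e$ with this configuration correspond to distinct $3$-paths $a,y,b$; among all $3$-paths through a common pair $\{a,b\}$, at most two are good, and any non-good one that is not contained in an edge is completed to a \emph{rare} cycle, but two rare cycles sharing the side-pair $\{a,b\}$ and the opposite path structure would themselves form a Berge $C_4$ (via the argument in the proof of Claim~\ref{lemma:two_three_path}) unless there are few of them; so one extracts a bound of a small constant (at most $2$, or perhaps slightly more, but absorbed into the constant $6$) for each of the $3$ middle-vertex choices. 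I would organize the final count as: at most $3$ configurations of the cycle on $e$, each contributing at most $2$ cycles, hence at most $6|E(\h)|$ in total, possibly after double-counting representative edges, which only helps. The bookkeeping of exactly which vertex plays which role is the fiddly part; the conceptual content is entirely Claim~\ref{lemma:two_three_path} plus the no-repeated-diagonal hypothesis built into rareness.
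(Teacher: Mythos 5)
Your architecture is the same as the paper's: fix a representative edge $e=\{a,b,c\}$, observe that a rare $4$-cycle whose vertex set contains $e$ is determined by the position of its fourth vertex relative to $\{a,b,c\}$ (three possibilities) together with that fourth vertex, and bound the number of fourth vertices per position by $2$ via Claim~\ref{lemma:two_three_path}, giving $3\cdot 2=6$ per edge. You have also correctly located the one delicate point: if $y,a,c,b,y$ is a rare cycle, then the $3$-path $a,y,b$ is \emph{not} good, because $c$ itself completes it to a rare $4$-cycle, so Claim~\ref{lemma:two_three_path} does not apply literally. (The paper's own wording glosses over exactly this: it declares these paths good, which is false under its definition; so your worry is legitimate.)

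Where your write-up falls short of a proof is in resolving that point. Your hedge ``at most $2$, or perhaps slightly more, but absorbed into the constant $6$'' does not prove the claim as stated: if a position could contribute more than $2$ cycles, the bound $6\abs{E(\h)}$ would fail (the surplus could only be absorbed into the final asymptotic estimate, not into this claim), and the mechanism you sketch (``two rare cycles sharing the side-pair would themselves form a Berge $C_4$ unless there are few of them'') is left vague. The clean fix is short: rareness of $y,a,c,b,y$ forbids a second edge of $\h$ on the diagonal $\{a,b\}$ besides $\{a,b,c\}$, so $\{a,y,b\}\notin E(\h)$ for every fourth vertex $y$ in this position; and an inspection of the proof of Claim~\ref{lemma:two_three_path} shows that it only ever uses the hypothesis $\{a,v_i,b\}\notin E(\h)$ (never the no-rare-completion clause), so that argument applies verbatim to three distinct fourth vertices $v_1,v_2,v_3$ in the same position and produces a Berge $C_4$, a contradiction. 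Hence at most $2$ fourth vertices per position (by pigeonhole over the three positions, at most $6$ rare cycles per edge), and since every rare $4$-cycle has a representative edge, there are at most $6\abs{E(\h)}$ rare $4$-cycles, exactly as in the paper.
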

\begin{proof}

We fix an edge $\{a,b,c\}\in E(\h)$. It suffices to show that the edge $\{a,b,c\}$ is representative of at most $6$ rare $4$-cycles (that is, $\{a,b,c\}$ is contained in the vertex set of at most $6$ rare $4$-cycles). Suppose by contradiction that this is not true. Observe that there are three possible positions for a fixed vertex $v$ among the vertices of a four cycle in $\partial \h$ with $\{a,b,c\}$. By the pigeonhole principle there are  $3$ distinct vertices $v_1,v_2,v_3$ different from $a$, $b$ or $c$ with the same position in the $4$-cycle. Without loss of generality, we may assume they form a $4$-cycle in the order $v_i,a,c, b,v_i$. Therefore from the definition of a rare $4$-cycle, there are at least three good $3$-paths in  $\partial \h$  from $a$ to $b$, a contradiction to Claim~\ref{lemma:two_three_path}.
\end{proof}

Using Claim~\ref{claim:rarec4}, it is easy to see that the number of $3$-paths in $\partial \h$ which are not good is at most $3\abs{E(\h)}+3\cdot 6\abs{E(\h)}=21\abs{E(\h)}$. Here we use the fact that each rare $4$-cycle induces an edge of $\h$.

By conditioning on the middle vertex of the $3$-path, we have the following estimate on the number of $3$-paths in $\partial \h$:
\begin{displaymath}
\#(\mbox{$3$-paths in $\partial \h$}) = \sum_{v \in V(\h)} \binom{d_s(v)}{2} = \sum_{v \in V(\h)} \binom{d(v)+d_{ex}(v)}{2}.
\end{displaymath}
 The following claim provides an upper bound on the number of good $3$-paths in $\partial \h$.
\begin{claim}\label{333}
\begin{displaymath}
\label{cases}
\#(\mbox{\emph{good $3$-paths in $\partial \h$}})  \leq 2\binom{n}{2}-4\sum_{v \in V(\h)}\binom{d_b(v)}{2}.
\end{displaymath}
\end{claim}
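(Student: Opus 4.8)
The plan is to combine Claim~\ref{lemma:two_three_path} with a counting argument driven by the block structure. Write $g(a,b)$ for the number of good $3$-paths of $\partial\h$ with endpoints $a$ and $b$; Claim~\ref{lemma:two_three_path} gives $g(a,b)\le 2$, so $\#(\text{good }3\text{-paths})=\sum_{\{a,b\}}g(a,b)\le 2\binom n2$. Calling $2-g(a,b)\ge 0$ the \emph{deficiency} of the pair $\{a,b\}$, it suffices to exhibit, inside the set of pairs of deficiency at least $1$, an injective image of some index set of size $4\sum_{v}\binom{d_b(v)}2$.

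The structural input is that if a pair $\{u,v\}$ is contained in two hyperedges $h,h'$ then $|h\cap h'|=2$, so $h,h'$ lie in a common block; hence all hyperedges containing a fixed shadow edge lie in one block. Consequently, for each vertex $v$ the shadow-neighbourhood of $v$ partitions, according to which block realises the incident shadow edge, into exactly $d_b(v)$ classes $N_1(v),\dots,N_{d_b(v)}(v)$, each of size at least $2$. Since $\sum_v\binom{d_b(v)}2$ counts the incidences $(v,\{\B_i,\B_j\})$ with $v$ in two distinct blocks $\B_i,\B_j$, it is enough to assign to each such incidence four pairs $\{a,b\}$ with $a\in N_i(v)$ and $b\in N_j(v)$, each having the property that $v$ is the \emph{unique} common shadow-neighbour of $a$ and $b$. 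Indeed, such a pair then has $g(a,b)\le 1$ (its only $3$-path is $a-v-b$), hence deficiency at least $1$; and the incidence $(v,\{\B_i,\B_j\})$ is recovered from $\{a,b\}$ as the unique common neighbour of $a,b$ together with the (unique) blocks realising $\{a,v\}$ and $\{b,v\}$, so different incidences are assigned disjoint sets of pairs. This yields the required injection, and with it the inequality $2\binom n2-\#(\text{good }3\text{-paths})=\sum_{\{a,b\}}(2-g(a,b))\ge 4\sum_v\binom{d_b(v)}2$.

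What must be checked is that the four pairs can be chosen as above. If $a$ lies in no block other than $\B_i$, then all shadow-neighbours of $a$ lie in $V(\B_i)$, and similarly for $b$ and $\B_j$, so every common neighbour of $a$ and $b$ lies in $V(\B_i)\cap V(\B_j)$; choosing $a,b$ \emph{private} to $\B_i$, resp.\ $\B_j$, this reduces the task to controlling $V(\B_i)\cap V(\B_j)\setminus\{v\}$. Using the description of blocks (each block is of type~1 or type~2, i.e.\ built from a single core edge with leaves, or is $K_4^{(3)-}$) one locates, in every block incident to $v$, two shadow-neighbours of $v$ private to that block, except in a short list of small configurations (a block that is a single edge, a short book through $v$, or $K_4^{(3)-}$), and one deals with those --- and with the case $|V(\B_i)\cap V(\B_j)|=2$, which does occur --- by direct inspection of the few hyperedges present.

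The main obstacle is precisely this selection. Two distinct blocks may share two vertices, and a neighbour of $v$ inside a block may itself be a cut vertex belonging to further blocks, so ``any two neighbours in each block'' need not produce pairs whose unique common neighbour is $v$. Pinning down a choice of witnesses for which the unique-common-neighbour property always holds, and verifying it in the small exceptional blocks allowed by the type~1/type~2 classification, is the technical heart of the proof; the rest is the counting assembled above.
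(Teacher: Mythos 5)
Your global counting frame (deficiencies $2-g(a,b)$ summed over pairs, four units per incidence $(v,\{\B_i,\B_j\})$) is essentially the paper's, but the step that certifies a pair as deficient is both left open and pinned to a property that is too strong to hold in general. You require, for each incidence, four pairs $\{a,b\}$ with $a\in N_i(v)$, $b\in N_j(v)$ such that $v$ is the \emph{unique} common shadow-neighbour of $a$ and $b$, and you explicitly defer the selection of such witnesses (``the technical heart'') to an inspection of small configurations. No such selection exists in general: take $\h=\bigl\{\{v,a,a'\},\{v,b,b'\},\{a,b,w\}\bigr\}$ on six vertices. This is Berge-$C_4$-free (only three edges), the two blocks at $v$ are single edges, so the four pairs attached to the incidence at $v$ are forced to be $\{a,b\},\{a,b'\},\{a',b\},\{a',b'\}$, and the pair $\{a,b\}$ has two common shadow-neighbours, $v$ and $w$. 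So your injection cannot be built, even though the claim is not in danger there: the second $3$-path $a,w,b$ is simply not good, because $\{a,b,w\}\in E(\h)$. This shows the correct certificate is not ``unique common neighbour'' but ``at most one \emph{good} $3$-path, necessarily through $v$,'' and your argument never supplies it.

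That is exactly what the paper proves: if $\{v,x_1,x_2\}$ and $\{v,y_1,y_2\}$ lie in different blocks, then every good $3$-path between $x_i$ and $y_j$ has middle vertex $v$. The proof is a short case analysis you are missing: a good path $x_1,u,y_1$ with $u\neq v$ yields edges $h_x\supseteq\{x_1,u\}$ and $h_y\supseteq\{y_1,u\}$; if $h_x,h_y$ and the two block edges are four distinct edges one gets a Berge $C_4$; otherwise $u\in\{x_2,y_2\}$, say $u=x_2$, and then the $4$-cycle $x_1,x_2,y_1,v$ of $\partial\h$ contains a good $3$-path, hence is not rare, which forces two edges of $\h$ inside these four vertices sharing a diagonal pair and thus puts $\{v,x_1,x_2\}$ and $\{v,y_1,y_2\}$ in the same block, a contradiction. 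Note that this argument crucially invokes the rare-$4$-cycle clause in the definition of good, which your proposal never uses; that clause is precisely what replaces your unattainable unique-common-neighbour requirement (and it also gives the handle needed for the bookkeeping across incidences, since any good $3$-path between an assigned pair must pass through the incidence's vertex $v$). As written, then, the proposal has a genuine gap at its central step, and the intended route to close it would fail.
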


\begin{proof}
  Fix a vertex $v$ and consider two adjacent edges $\{v,x_1,x_2\}$ and $\{v,y_1,y_2\}$ such that they belong to the different blocks; clearly the vertices $v,x_1,x_2,y_1,y_2$ are all distinct. We claim that there is at most one good $3$-path, namely $x_i,v,y_j$, between $x_i$ and $y_j$, for each $i,j\in \{1,2\}$. Suppose this is not the case, then without loss of generality, there exists $u\neq v$ such that $x_1,u,y_1$ is a good $3$-path. By the definition of a good $3$-path, there are two distinct edges $h_x,h_y\in \h$ such that $x_1,u\in h_x$ and $y_1,u\in h_y$. If $\{v,x_1,x_2\}$, $\{v,y_1,y_2\}$, $h_x$ and $h_y$ are all different edges, then clearly there is a Berge $4$-cycle. Therefore either $\{v,x_1,x_2\}=h_x$ or $\{v,y_1,y_2\}=h_y$. Hence we have $u\in\{x_2,y_2\}$, without loss of generality we may assume $u=x_2$. Observe that the $4$-cycle $x_1,x_2,y_1,v$ of $\partial \h$  contains a  good $3$-path and so by definition the $4$-cycle $x_1,x_2,y_1,v$ is not a rare $4$-cycle. Hence we have a contradiction to the statement that edges $\{v,x_1,x_2\}$ and $\{v,y_1,y_2\}$  belong to the different blocks. Concluding that there is at most one good path between $x_i$ and $y_j$. So there are at least $4\sum_{v \in V(\h)}\binom{d_b(v)}{2}$ pairs of vertices which have at most one good $3$-path between them. From Claim~\ref{lemma:two_three_path}, for each pair of vertices there are at most two of  good $3$-paths in $\partial \h$. These observations complete the proof of Claim~\ref{333}.
  \end{proof}

Thus, since the number of $3$-paths which are not good is at most $21\abs{E(\h)}$, we have
\begin{equation}
\label{mainineq}
\sum_{v \in V(\h)} \binom{d(v)+d_{ex}(v)}{2} =\#(\mbox{$3$-paths in $\partial \h$}) \le 2\binom{n}{2}-4\sum_{v \in V(\h)}\binom{d_b(v)}{2} + 21\abs{E(\h)}.
\end{equation}
Now, we will obtain estimates for $\sum_{v \in V(\h)}d_{ex}(v)$ and $\sum_{v \in V(\h)}d_b(v)$. For each block $\B$ and $v\in V(\B)$, let $d_{ex}^{\B}(v)$ denote an excess degree of $v$ inside the hypergraph $\B$. If $\B$ is type~1, then every vertex  $v \in V(\B)$ has $d_{ex}^{\B}(v) \ge 1$, so for type~1 blocks, $\sum_{v \in V(\B)} d_{ex}^{\B}(v) \ge \abs{V(\B)}$. It is easy to see that for every block $\B$ we have $\abs{V(\B)} > \abs{E(\B)}$, so $\sum_{v \in V(\B)} d_{ex}^{\B}(v) > \abs{E(\B)}$, for every type~1 block $\B$.
If $\B$ is a type~2 block, then 
$\sum_{v \in V(\B)} d_{ex}^{\B}(v) =3=\abs{E(\B)}.$
Therefore, 
\[ \sum_{v \in V(\B)} d_{ex}^{\B}(v) \ge \abs{E(\B)}\]
for every block $\B$ in $B(\h)$. This together with the fact that the blocks define a partition of the edges $E(\h)$ implies

\begin{equation}
\label{bbound}
\sum_{v \in V(\h)}d_{ex}(v) =\sum_{\B \in B(\h)} \sum_{v \in V(\B)} d_{ex}^{\B}(v)\ge\sum_{\B \in B(\h)}\abs{E(\B)}=\abs{E(\h)}.
\end{equation}

On the other hand, a simple double counting argument yields 
\begin{displaymath}
\sum_{v \in V(\h)} d_b(v)=\sum_{\B \in B(\h)} \abs{V(\B)}.
\end{displaymath}

Therefore,

\begin{equation}
\label{dbbound}
\sum_{v \in V(\h)}d_{b}(v)=\sum_{\B \in B(\h)}\abs{V(\B) }\ge \sum_{\B \in B(\h)}\abs{\B}=\abs{E(\h)}.
\end{equation}

Now we will use the inequalities derived so far to get desired upper bound on $\abs{E(\h)}$.

By \eqref{bbound},
$$4\abs{E(\h)}=3\abs{E(\h)} +\abs{E(\h)}\le\sum_{v \in V(\h)}(d(v)+d_{ex}(v)).$$

Since $\binom x2$ is a convex function, by Jensen's inequality we have
\[\binom {\frac 1n\sum_{v \in V(\h)}(d(v)+d_{ex}(v))}2\le \frac 1n \sum_{v \in V(\h)} \binom{d(v) +d_{ex}(v)}{2}.\]

Combining the above two inequalities we get
\begin{equation}
\label{mainest1}
n \binom{\frac{4\abs{E(\h)}}{n}}{2}
\le \sum_{v \in V(\h)} \binom{d(v) +d_{ex}(v)}{2}.
\end{equation}
Similarly, by~\eqref{dbbound} and Jensen's inequality, we have
\begin{equation}
\label{mainest2}
n \binom{\frac{\abs{E(\h)}}{n}}{2} \le \sum_{v \in V(\h)} \binom{d_b(v)}{2}.
\end{equation}
Combining \eqref{mainineq}, \eqref{mainest1} and \eqref{mainest2} we obtain
\begin{equation}
\label{final}
n \binom{\frac{4\abs{E(\h)}}{n}}{2} +4n \binom{\frac{\abs{E(\h)}}{n}}{2} \le 2 \binom{n}{2} + 21 \abs{E(\h)}.
\end{equation}
Rearranging \eqref{final} yields the desired bound,
\begin{displaymath}
\abs{E(\h)} \le (1+o(1))\frac{n^{3/2}}{\sqrt{10}}. \qedhere
\end{displaymath}


\begin{thebibliography}{1}

\bibitem{bb}
B. Bollob\'as and E. Gy\H{o}ri. {\em Pentagons vs. triangles.} Discrete Mathematics \textbf{308}(19), (2008): 4332--4336.

\bibitem{bondy}
J. A. Bondy and M. Simonovits. {\em Cycles of even length in graphs.} Journal of Combinatorial Theory, Series B \textbf{16}(2), (1974): 97--105.

\bibitem{ErdRenyiSos}
P. Erd\H{o}s, A. R\'enyi and V. T. S\'os. {\em On a problem of graph theory.} Stud Sci. Math. Hung. {\bf 1}, (1966): 215--235.

\bibitem{furedi}
Z. F\"uredi and L. \"Ozkahya. {\em On 3-uniform hypergraphs without a cycle of a given length.} Discrete Applied Mathematics \textbf{216}, (2017): 582--588.

\bibitem{gyori0}
E. Gy\H{o}ri and N. Lemons. {\em Hypergraphs with no odd cycle of given length.} Electronic Notes in Discrete Mathematics \textbf{34}, (2009): 359--362.

\bibitem{gyori}
E. Gy\H{o}ri and N. Lemons.  {\em Hypergraphs with no cycle of a given length.} Combinatorics, Probability and Computing \textbf{21}(1--2), (2012): 193.
\end{thebibliography}
\end{document}